\newtheorem{theorem}{Theorem}[section]
\newtheorem{lemma}[theorem]{Lemma}
\theoremstyle{definition}
\newtheorem{definition}[theorem]{Definition}
\newtheorem{proposition}[theorem]{Proposition}
\newtheorem{corollary}[theorem]{Corollary}
\theoremstyle{remark}
\numberwithin{equation}{section}
\begin{document}

\title{Homotopies of Lie Crossed Module Morphisms }


\author{\.{I}.\.{I}lker Ak\c{c}a}
\address{}
\curraddr{Department of Mathematics and Computer Science, Art and Science Faculty, Eski\c{s}ehir Osmangazi University, 26480, Eski\c{s}ehir, Turkey}
\email{iakca@ogu.edu.tr}
\thanks{}

\author{Yavuz Sidal }
\address{ }
\curraddr{Isiklar Air Force High School , Maths Teacher, 16039, Bursa/TURKEY}
\email{yavuz.sidal@yahoo.com}

\thanks{}

\subjclass[2010]{18D05; 18G55; 18D20.}

\keywords{Crossed Modules, derivations, homotopy.}

\date{}

\dedicatory{}

\begin{abstract}
In this paper we will define notion homotopy of morphisms of \ crossed
modules of Lie algebras. Then we construct a groupoid structure of Lie
crossed module morphisms and their homotopies.
\end{abstract}

\maketitle

\section{Introduction}

Crossed modules were firstly introduced by J.H.C Whitehead in
his work on combinatorial homotopy theory \cite{wh}. They have found
important role in many areas of mathematics (including homotopy theory,
homology and cohomology of groups, algebraic K-theory, cyclic homology,
combinatorial group theory and differential geometry). Kassel and Loday \cite%
{KL} introduced crossed modules of Lie algebras as computational algebraic
objects equivalent to simplicial Lie algebras with associated Moore complex
of length 1.

The homotopy relation between crossed module morphisms $\mathcal{P}%
\longrightarrow \mathcal{P}^{\prime }$ can be equivalently addressed either
by considering natural functorial path objects for $\mathcal{P}^{\prime }$
or cylindir objects for $\mathcal{P}$. It yields, given any two crossed
modules $\mathcal{P}$ and $\mathcal{P}^{\prime }$, a groupoid of morphisms $%
\mathcal{P}\longrightarrow \mathcal{P}^{\prime }$ and their homotopies. In
addition , the homotopy relation between crossed module morphisms $\mathcal{P%
}\longrightarrow \mathcal{P}^{\prime }$ is an equivalence relation in the
general case, with no restriction on $\mathcal{P}$ or $\mathcal{P}^{\prime }$%
. This should be compared with what would be guaranteed from the model
category \cite{DS} point of wiev, where we would expect homotopy of maps $%
\mathcal{P}\longrightarrow \mathcal{P}^{\prime }$ to be an equivalence
relation only when $\mathcal{P}$ is a cofibrant (given that any object is
fibrant). In the well-known model category structure in the category of
crossed modules \cite{CG}, obteined by transporting the usal model category
structure of the category of simplicial sets, $\mathcal{P}=(\partial
:M\longrightarrow P)$ is cofibrant if and only if $P$ is a free group (\cite%
{Noohi})

Whitehead in \cite{wh} explored homotopies of morphisms of his "homotopy
systems" and this was put in the general context of crossed complexes of
groupoids by Brown and Higgins in \cite{br-hig}. In this paper we will
define notion of homotopy for morphisms of \ crossed modules of Lie algebras
and we will show that if $\mathcal{P}$ and $\mathcal{P}^{\prime }$ are
crossed modules of Lie algebras, without any restriction on $\mathcal{P}$ or
$\mathcal{P}^{\prime }$, then we have a gropoid of crossed module morphisms $%
\mathcal{P}\longrightarrow \mathcal{P}^{\prime }$ and their homotopies,
similarly to the group case.

\section{Crossed Modules}

J.H.C Whitehead (1949) \cite{wh} described crossed modules in various
contexts especially in his investigations into the algebraic structure of
relative homotopy groups. In this section, we recall the definition of \
crossed modules of \ Lie algebras given by Kassel and Loday \cite{KL}.

Let $M$ and $P$ be two Lie algebras. By an action of $P$ on $M$ we mean a
bilinear map $P\times M\longrightarrow M$, $(p,m)\longmapsto p\cdot m$
satisfying%
\begin{equation*}
\lbrack p,p^{\prime }]\cdot m=p\cdot (p^{\prime }\cdot m)-p^{\prime }\cdot
(p\cdot m)
\end{equation*}%
\begin{equation*}
p\cdot \lbrack m,m^{\prime }]=[p\cdot m,m^{\prime }]+[m,p\cdot m^{\prime }]
\end{equation*}%
for all $m,m^{\prime }\in M$, $p,p^{\prime }\in P.$ For instance, if $P$ is
a subalgebra of some Lie algebra $Q$ (including possibly the case $P=Q$)$,$%
and if $M$ is an ideal in $Q$, then Lie bracket in $Q$ yields an action of $%
P $ on $M.$

A crossed module of Lie algebras is a Lie homomorphism $\partial
:M\longrightarrow P$ together with an action of $P$ on $M$ such that for all
$m,m^{\prime }\in M$, $p\in P$

\begin{equation*}
\text{\textbf{CM1) }}\partial (p\cdot m)=[p,\partial (m)]\text{ }
\end{equation*}%
and%
\begin{equation*}
\text{\textbf{CM2)}}\mathbf{\ \ }\partial m\cdot m^{\prime }=[m,m^{\prime }]%
\text{ }.
\end{equation*}%
The last condition is called the Peiffer identity. We denote such a crossed
module by $\mathcal{P}=(M,P,\partial )$.

A morphism of crossed modules of Lie algebras from $\mathcal{P}=(M,P,\partial )$
to $\mathcal{P}^{\prime }=(M^{\prime },P^{\prime },\partial ^{\prime })$ is
a pair of Lie\textbf{\ }algebra morphisms,%
\begin{equation*}
\theta :M\longrightarrow M^{\prime }\text{, }\psi :P\longrightarrow
P^{\prime }
\end{equation*}%
such that%
\begin{equation*}
\theta (p\cdot m)=\psi (p)\cdot \theta (m)\text{ and }\partial ^{\prime
}\theta (m)=\psi \partial (m).
\end{equation*}%
Therefore we can define the category of Lie crossed modules denoting it as
\textbf{LXmod.}

\subsection{Examples}

\textbf{1.} Let $I$ be any ideal of a Lie algebra $P$. Consider an inclusion
map%
\begin{equation*}
inc:I\longrightarrow P.
\end{equation*}%
Then $(I,P,inc)$ is a crossed module. Conversely given any crossed module $%
\partial :M\longrightarrow P$, one can easily verify that $\partial M=I$ is
an ideal in $P$.

\textbf{2.} Let $M$ be any $R$-bimodule. It can be consider as an $R$%
-algebra with zero multiplication, and then $\mathbf{0}:M\longrightarrow R$
is a crossed $R$-module by $\mathbf{0}(c)\cdot c^{\prime }=0c^{\prime
}=0=cc^{\prime }$ for all $c,c^{\prime }\in M.$

Conversely, given any crossed module $\partial :C\longrightarrow R$, then $%
Ker\partial $ is an $R/\partial C$-module.

\section{Homotopies of Lie crossed module morphisms}

\bigskip Whitehead in \cite{wh} explored homotopies of morphisms of his \
"homotopy systems" and this was put in the general context of crossed
complexes of groupoids by Brown and Higgins in \cite{br-hig}. In this
section we define notion of homotopy for morphisms of \ crossed modules of
Lie algebras.

\

\begin{definition}
Let $\mathcal{P}=(M,P,\partial )$ and $\mathcal{P}^{\prime }=(M^{\prime
},P^{\prime },\partial ^{\prime })$ be crossed modules of Lie algebras, $%
f=(f_{1},f_{0})$ and $g=(g_{1},g_{0})$ be crossed module morphisms $\mathcal{%
P}\longrightarrow $ $\mathcal{P}^{\prime }$. If $\ $there is a linear map $d:P\longrightarrow M^{\prime }$
such that,
\begin{equation*}
\begin{array}{ccc}
g_{0}(p) & = & f_{0}(p)+\partial ^{\prime }d(p) \\
g_{1}(m) & = & f_{1}(m)+d\partial (m)%
\end{array}%
\end{equation*}%
for $m\in M$ and $p\in P,$ then we say that $d$ is a homotopy connecting $f$
to $g$ and we write $d:f\simeq g$ or $f\overset{d}{\longrightarrow }g.$
\end{definition}

\begin{definition}
Let $\mathcal{P}=(M,P,\partial )$ and $\mathcal{P}^{\prime }=(M^{\prime
},P^{\prime },\partial ^{\prime })$ be crossed modules of Lie algebras and $%
f=(f_{1},f_{0})$ be a crossed module morphism $\mathcal{P}=(M,P,\partial
)\longrightarrow $ $\mathcal{P}^{\prime }=(M^{\prime },P^{\prime },\partial
^{\prime }).$ Then a linear map $d:P\longrightarrow M^{\prime }$ satisfying
for all $p,p^{\prime }\in P$
\begin{equation*}
d[p,p^{\prime }]=f_{0}(p)\cdot d(p^{\prime })-f_{0}(p^{\prime })\cdot
d(p)+[d(p),d(p^{\prime })]
\end{equation*}%
is called an $f_{0}$-derivation .
\end{definition}

\begin{proposition}
Let $\mathcal{P}=(M,P,\partial )$ and $\mathcal{P}^{\prime }=(M^{\prime
},P^{\prime },\partial ^{\prime })$ be crossed modules of Lie algebras and $%
f=(f_{1},f_{0})$ be a crossed module morphism $\mathcal{P}\longrightarrow $ $%
\mathcal{P}^{\prime }.$ If $d:P\longrightarrow M^{\prime }$ is an $f_{0}$%
-derivation, then the maps $g_{0}:P\longrightarrow P^{\prime }$ defined by $%
g_{0}(p)=f_{0}(p)+\partial ^{\prime }d(p)$ for all $p\in P$ and $%
g_{1}:M\longrightarrow M^{\prime }$ defined by $g_{1}(m)=f_{1}(m)+d\partial
(m)$ for all $m\in M$ are Lie algebra morphisms.
\end{proposition}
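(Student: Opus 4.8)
The plan is to verify that both $g_0$ and $g_1$ respect the Lie bracket, since linearity of each is immediate from the linearity of $f_0$, $f_1$, $d$, and $\partial'$.

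First I would handle $g_0$. For $p,p'\in P$ I would compute $g_0([p,p'])=f_0([p,p'])+\partial'd([p,p'])$ and expand using that $f_0$ is a Lie morphism and that $d$ is an $f_0$-derivation, giving
\begin{equation*}
g_0([p,p'])=[f_0(p),f_0(p')]+\partial'\bigl(f_0(p)\cdot d(p')-f_0(p')\cdot d(p)+[d(p),d(p')]\bigr).
\end{equation*}
Separately I would expand $[g_0(p),g_0(p')]=[f_0(p)+\partial'd(p),\,f_0(p')+\partial'd(p')]$ bilinearly into four bracket terms. The key algebraic inputs that reconcile these two expressions are the crossed-module axiom \textbf{CM1} for $\mathcal{P}'$, namely $\partial'(x\cdot m)=[x,\partial'm]$, which converts the action terms $\partial'(f_0(p)\cdot d(p'))$ into brackets $[f_0(p),\partial'd(p')]$, and the Peiffer identity \textbf{CM2} for $\mathcal{P}'$, which rewrites $\partial'[d(p),d(p')]$ as $[\partial'd(p),\partial'd(p')]$ (using $\partial'[d(p),d(p')]=\partial'(\partial'd(p)\cdot d(p'))=[\partial'd(p),\partial'd(p')]$). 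After applying these, the expansion of $g_0([p,p'])$ should match $[g_0(p),g_0(p')]$ term by term.

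Next I would treat $g_1$. For $m,m'\in M$ I would compute $g_1([m,m'])=f_1([m,m'])+d\partial([m,m'])=[f_1(m),f_1(m')]+d[\partial m,\partial m']$, using that $f_1$ and $\partial$ are Lie morphisms. Applying the $f_0$-derivation property to $d[\partial m,\partial m']$ yields action terms $f_0(\partial m)\cdot d(\partial m')$ and a bracket $[d\partial m,d\partial m']$. To match $[g_1(m),g_1(m')]=[f_1(m)+d\partial m,\,f_1(m')+d\partial m']$, the crucial simplifications are the morphism compatibility $\partial'f_1=f_0\partial$ (so that $f_0(\partial m)=\partial'f_1(m)$), followed by the Peiffer identity \textbf{CM2} in the form $\partial'f_1(m)\cdot d(\partial m')=[f_1(m),d\partial m']$; this turns each action term into the mixed brackets appearing in the bilinear expansion.

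The main obstacle will be the $g_1$ computation rather than $g_0$: getting the cross terms $[f_1(m),d\partial m']$ and $[d\partial m,f_1(m')]$ to appear with the correct signs requires careful bookkeeping of the derivation identity's antisymmetric structure together with two applications of the Peiffer identity, and one must be attentive that the action is being applied to the right arguments. I expect the $g_0$ verification to be comparatively routine once \textbf{CM1} and \textbf{CM2} are invoked, so I would present $g_0$ first as a warm-up and then carry the analogous but more delicate bracket-tracking argument through for $g_1$.
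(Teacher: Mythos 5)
Your proposal is correct and follows essentially the same route as the paper: expand $g_0[p,p']$ and $g_1[m,m']$ via the $f_0$-derivation identity, convert the action terms with \textbf{CM1}, the compatibility $\partial'f_1=f_0\partial$, and the Peiffer identity \textbf{CM2}, then regroup into the bilinear expansion of $[g_i(\cdot),g_i(\cdot)]$. The only cosmetic difference is that the paper handles $\partial'[d(p),d(p')]=[\partial'd(p),\partial'd(p')]$ directly from $\partial'$ being a Lie algebra morphism, whereas you route it through \textbf{CM2} and \textbf{CM1}; both are valid.
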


\begin{proof}
For $p,p^{\prime }\in P$,
\begin{equation*}
\begin{array}{lll}
g_{0}(p+p^{\prime }) & = & f_{0}(p+p^{\prime })+\partial ^{\prime
}d(p+p^{\prime }) \\
& = & f_{0}(p)+f_{0}(p^{\prime })+\partial ^{\prime }(d(p)+d(p^{\prime }))
\\
& = & f_{0}(p)+f_{0}(p^{\prime })+\partial ^{\prime }d(p)+\partial ^{\prime
}d(p^{\prime }) \\
& = & g_{0}(p)+g_{0}(p^{\prime }),%
\end{array}%
\end{equation*}%
and%
\begin{equation*}
\begin{array}{lll}
g_{0}[p,p^{\prime }] & = & f_{0}[p,p^{\prime }]+\partial ^{\prime
}d[p,p^{\prime }] \\
& = & [f_{0}(p),f_{0}(p^{\prime })]+\partial ^{\prime }(f_{0}(p)\cdot
d(p^{\prime })-f_{0}(p^{\prime })\cdot d(p) \\
&  & +[d(p),d(p^{\prime })] \\
& = & [f_{0}(p),f_{0}(p^{\prime })]+\partial ^{\prime }(f_{0}(p)\cdot
d(p^{\prime }))-\partial ^{\prime }(f_{0}(p^{\prime })\cdot d(p)) \\
&  & +\partial ^{\prime }[d(p),d(p^{\prime })] \\
& = & [f_{0}(p),f_{0}(p^{\prime })]+[f_{0}(p),\partial ^{\prime }d(p^{\prime
})]-[f_{0}(p^{\prime }),\partial ^{\prime }d(p)] \\
&  & +[\partial ^{\prime }d(p),\partial ^{\prime }d(p^{\prime })] \\
& = & [f_{0}(p),f_{0}(p^{\prime })]+[f_{0}(p),\partial ^{\prime }d(p^{\prime
})]+[\partial ^{\prime }d(p),f_{0}(p^{\prime }),] \\
&  & +[\partial ^{\prime }d(p),\partial ^{\prime }d(p^{\prime })] \\
& = & [f_{0}(p)+\partial ^{\prime }d(p),f_{0}(p^{\prime })+\partial ^{\prime
}d(p^{\prime })], \\
& = & [g_{0}(p),g_{0}(p^{\prime })].%
\end{array}%
\end{equation*}%
Thus $g_{0}$ is a Lie algebra morphism.

For $m,m^{\prime }\in M$ $,$%
\begin{equation*}
\begin{array}{lll}
g_{1}(m+m^{\prime }) & = & f_{1}(m+m^{\prime })+d\partial (m+m^{\prime }) \\
& = & f_{1}(m)+f_{1}(m^{\prime })+d(\partial (m)+\partial (m^{\prime })) \\
& = & f_{1}(m)+f_{1}(m^{\prime })+d\partial (m)+d\partial (m^{\prime }) \\
& = & g_{1}(m)+g_{1}(m^{\prime }),%
\end{array}%
\end{equation*}%
and%
\begin{equation*}
\begin{array}{lll}
g_{1}[m,m^{\prime }] & = & f_{1}[m,m^{\prime }]+d\partial \lbrack
m,m^{\prime }] \\
& = & [f_{1}(m),f_{1}(m^{\prime })]+d[\partial (m),\partial (m^{\prime })]
\\
& = & [f_{1}(m),f_{1}(m^{\prime })]+f_{0}(\partial (m))\cdot d(\partial
(m^{\prime })) \\
&  & -f_{0}(\partial (m^{\prime }))\cdot d(\partial (m))+[d(\partial
(m)),d(\partial (m^{\prime }))] \\
& = & [f_{1}(m),f_{1}(m^{\prime })]+\partial ^{\prime }f_{1}(m)\cdot
d(\partial (m^{\prime })) \\
&  & -\partial ^{\prime }f_{1}(m^{\prime })\cdot d(\partial (m))+[d(\partial
(m)),d(\partial (m^{\prime }))] \\
& = & [f_{1}(m),f_{1}(m^{\prime })]+[f_{1}(m),d(\partial (m^{\prime
}))]-[f_{1}(m^{\prime }),d(\partial (m))] \\
&  & +[d(\partial (m)),d(\partial (m^{\prime }))] \\
& = & [f_{1}(m),f_{1}(m^{\prime })]+[f_{1}(m),d(\partial (m^{\prime
}))]+[d(\partial (m)),f_{1}(m^{\prime })] \\
&  & +[d(\partial (m)),d(\partial (m^{\prime }))]+[d(\partial
(m)),d(\partial (m^{\prime }))] \\
& = & [f_{1}(m)+d(\partial (m)),f_{1}(m^{\prime })+d(\partial (m^{\prime }))]
\\
& = & [g_{1}(m),g_{1}(m^{\prime })]%
\end{array}%
\end{equation*}%
thus $g_{1}$ is a Lie algebra morphism.
\end{proof}

\begin{proposition}
\bigskip
\begin{equation*}
g_{0}\partial =\partial ^{\prime }g_{1}.
\end{equation*}
\end{proposition}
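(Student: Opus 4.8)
The plan is to verify the identity $g_{0}\partial=\partial'g_{1}$ on an arbitrary element $m\in M$ by expanding both sides using the explicit formulas for $g_{0}$ and $g_{1}$ provided in the preceding proposition, and then reducing the discrepancy terms by means of the crossed module axiom CM1 together with the fact that $f=(f_{1},f_{0})$ is already a crossed module morphism. This is a direct computation rather than a structural argument, so I do not expect to need any new machinery.

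First I would unwind the left-hand side: for $m\in M$ we have $\partial(m)\in P$, so
\begin{equation*}
g_{0}(\partial(m))=f_{0}(\partial(m))+\partial'd(\partial(m)).
\end{equation*}
Then I would unwind the right-hand side similarly: since $g_{1}(m)=f_{1}(m)+d\partial(m)$, applying $\partial'$ gives
\begin{equation*}
\partial'g_{1}(m)=\partial'f_{1}(m)+\partial'd(\partial(m)).
\end{equation*}
Comparing the two expressions, the terms $\partial'd(\partial(m))$ agree immediately, so the whole statement collapses to the single equality $f_{0}(\partial(m))=\partial'f_{1}(m)$.

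This residual equality is precisely the second defining condition for $f=(f_{1},f_{0})$ to be a morphism of crossed modules, namely $\partial'f_{1}=f_{0}\partial$ (written in the excerpt as $\partial'\theta(m)=\psi\partial(m)$ with $\theta=f_{1}$, $\psi=f_{0}$). Hence the identity follows by invoking that hypothesis. The only genuine obstacle, if there is one, is bookkeeping: making sure the morphism condition is applied in the correct direction and that the linearity/homomorphism property of $\partial'$ is used to pull $\partial'$ through the sum $f_{1}(m)+d\partial(m)$. Since all maps involved are linear and $\partial'$ is a Lie algebra homomorphism, this step is routine, and the proof concludes at once.
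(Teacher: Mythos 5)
Your proposal is correct and is essentially identical to the paper's proof: both reduce the identity to the morphism condition $\partial'f_{1}=f_{0}\partial$ for $f$ together with the linearity of $\partial'$. (The mention of CM1 in your plan turns out to be unnecessary, but this does not affect the argument.)
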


\begin{proof}
For $m\in M$,%
\begin{equation*}
\begin{array}{lll}
(g_{0}\partial )(m) & = & g_{0}(\partial (m)) \\
& = & f_{0}(\partial (m))+\partial ^{\prime }d(\partial (m)) \\
& = & \partial ^{\prime }(f_{1}(m))+\partial ^{\prime }(d\partial (m)) \\
& = & \partial ^{\prime }(f_{1}(m)+d\partial (m)) \\
& = & \partial ^{\prime }(g_{1}(m)) \\
& = & (\partial ^{\prime }g_{1})(m),%
\end{array}%
\end{equation*}%
so%
\begin{equation*}
g_{0}\partial =\partial ^{\prime }g_{1}.
\end{equation*}
\end{proof}

\begin{proposition}
For $p\in P$ and $m\in M,$%
\begin{equation*}
g_{1}(p\cdot m)=g_{0}(p)\cdot g_{1}(m)\text{.}
\end{equation*}
\end{proposition}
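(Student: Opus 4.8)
The plan is to verify the identity by expanding each side independently and matching terms. Writing $g_1(m)=f_1(m)+d\partial(m)$ and $g_0(p)=f_0(p)+\partial'd(p)$, I would begin with the left-hand side. Since $f=(f_1,f_0)$ is a crossed module morphism we have $f_1(p\cdot m)=f_0(p)\cdot f_1(m)$, and by \textbf{CM1} for $\mathcal{P}$ we have $\partial(p\cdot m)=[p,\partial m]$. Applying the $f_0$-derivation property of $d$ to $[p,\partial m]$ then gives
\begin{equation*}
g_1(p\cdot m)=f_0(p)\cdot f_1(m)+f_0(p)\cdot d(\partial m)-f_0(\partial m)\cdot d(p)+[d(p),d(\partial m)].
\end{equation*}

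Next I would expand the right-hand side $g_0(p)\cdot g_1(m)=\bigl(f_0(p)+\partial'd(p)\bigr)\cdot\bigl(f_1(m)+d\partial(m)\bigr)$ using bilinearity of the action, obtaining the four terms
\begin{equation*}
f_0(p)\cdot f_1(m)+f_0(p)\cdot d\partial(m)+\partial'd(p)\cdot f_1(m)+\partial'd(p)\cdot d\partial(m).
\end{equation*}
The first two terms coincide with the first two terms produced on the left. It therefore remains to show
\begin{equation*}
-f_0(\partial m)\cdot d(p)+[d(p),d(\partial m)]=\partial'd(p)\cdot f_1(m)+\partial'd(p)\cdot d\partial(m).
\end{equation*}

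The key to closing the gap is the Peiffer identity \textbf{CM2} in $\mathcal{P}^{\prime}$, which converts actions by elements in the image of $\partial'$ into brackets in $M'$. For the last term on the right it gives $\partial'd(p)\cdot d\partial(m)=[d(p),d\partial(m)]$, matching the bracket term on the left. For the remaining pieces I would use the morphism relation $\partial'f_1=f_0\partial$ to write $f_0(\partial m)=\partial'f_1(m)$, so that $f_0(\partial m)\cdot d(p)=\partial'f_1(m)\cdot d(p)=[f_1(m),d(p)]$, while $\partial'd(p)\cdot f_1(m)=[d(p),f_1(m)]$. Antisymmetry of the bracket then yields $-f_0(\partial m)\cdot d(p)=\partial'd(p)\cdot f_1(m)$, completing the match. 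I expect this final bookkeeping step to be the main obstacle: recognizing that the two ``mixed'' terms cancel correctly requires simultaneously invoking the Peiffer identity, the intertwining relation $\partial'f_1=f_0\partial$, and the skew-symmetry of $[\,,\,]$ in $M'$, rather than any single structural axiom.
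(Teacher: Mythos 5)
Your proof is correct and is essentially the paper's own argument: both rely on expanding $g_1(p\cdot m)$ via the $f_0$-derivation property applied to $d[p,\partial m]$, then invoking $\partial'f_1=f_0\partial$, the Peiffer identity in $\mathcal{P}'$, and antisymmetry of the bracket. The only difference is presentational — you expand both sides and match terms, whereas the paper rewrites the left-hand side into the right-hand side in a single chain of equalities.
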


\begin{proof}
\begin{equation*}
\begin{array}{lll}
g_{1}(p\cdot m) & = & f_{1}(p\cdot m)+d\partial (p\cdot m) \\
& = & f_{0}(p)\cdot f_{1}(m)+d[p,\partial (m)] \\
& = & f_{0}(p)\cdot f_{1}(m)+f_{0}(p)\cdot d\partial (m)-f_{0}\partial
(m)\cdot d(p)+[d(p),d\partial (m)] \\
& = & f_{0}(p)\cdot f_{1}(m)+f_{0}(p)\cdot d\partial (m)-\partial ^{\prime
}f_{1}(m)\cdot d(p)+[d(p),d\partial (m)] \\
& = & f_{0}(p)\cdot f_{1}(m)+f_{0}(p)\cdot d\partial
(m)-[f_{1}(m),d(p)]+[d(p),d\partial (m)] \\
& = & f_{0}(p)\cdot f_{1}(m)+f_{0}(p)\cdot d\partial
(m)+[d(p),f_{1}(m)]+[d(p),d\partial (m)] \\
& = & f_{0}(p)\cdot f_{1}(m)+f_{0}(p)\cdot d\partial
(m)+[d(p),f_{1}(m)+d\partial (m)] \\
& = & f_{0}(p)\cdot f_{1}(m)+f_{0}(p)\cdot d\partial (m)+\partial ^{\prime
}d(p)\cdot (f_{1}(m)+d\partial (m)) \\
& = & g_{0}(p)\cdot g_{1}(m).%
\end{array}%
\end{equation*}
\end{proof}

Thus by the above propositions, we can give the following theorem.

\begin{theorem}
Let $\mathcal{P}=(M,P,\partial )$ and $\mathcal{P}^{\prime }=(M^{\prime
},P^{\prime },\partial ^{\prime })$ be crossed modules of Lie algebras and $%
f=(f_{1},f_{0})$ be a crossed module morphism $\mathcal{P}\longrightarrow $ $%
\mathcal{P}^{\prime }.$ If $d:P\longrightarrow M^{\prime }$ is an $f_{0}$%
-derivation, then the map $g=(g_{1},g_{0}):\mathcal{P}\longrightarrow
\mathcal{P}^{\prime }$ is a Lie crossed module morphism.
\end{theorem}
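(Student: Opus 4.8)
The plan is to assemble the theorem directly from the three propositions already established, since they have been engineered precisely to deliver each clause of the definition of a Lie crossed module morphism. Recall that a morphism $g=(g_1,g_0):\mathcal{P}\longrightarrow\mathcal{P}'$ consists of two data subject to three verifications: (i) $g_0:P\longrightarrow P'$ and $g_1:M\longrightarrow M'$ are Lie algebra homomorphisms, (ii) the squares commute, i.e. $g_0\partial=\partial' g_1$, and (iii) $g_1$ is equivariant with respect to $g_0$, i.e. $g_1(p\cdot m)=g_0(p)\cdot g_1(m)$. I would simply observe that these three requirements are exactly the content of the three preceding propositions.

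Concretely, I would define $g_0(p)=f_0(p)+\partial'd(p)$ and $g_1(m)=f_1(m)+d\partial(m)$ as in the hypotheses, then proceed in order. First, the first proposition shows that both $g_0$ and $g_1$ are Lie algebra morphisms, settling clause (i). Next, the second proposition gives $g_0\partial=\partial'g_1$, which is clause (ii). Finally, the third proposition establishes the equivariance $g_1(p\cdot m)=g_0(p)\cdot g_1(m)$, which is clause (iii). Since these are the complete set of axioms defining a morphism in \textbf{LXmod}, the pair $g=(g_1,g_0)$ is a Lie crossed module morphism, and the theorem follows.

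There is essentially no obstacle here, since all the substantive work—each of which relied crucially on the $f_0$-derivation identity for $d$ together with the crossed module axioms \textbf{CM1} and \textbf{CM2} and the morphism conditions for $f$—has been discharged in the three propositions. The only point requiring a moment's care is simply to note that these three propositions are jointly exhaustive of the morphism axioms, so that no additional verification (for instance, of linearity, which is already subsumed in the statement that $g_0$ and $g_1$ are Lie algebra morphisms) is needed. Thus the theorem is a formal consequence of what precedes it, and the proof amounts to citing the three propositions and invoking the definition of a crossed module morphism.
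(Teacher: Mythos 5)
Your proposal is correct and follows exactly the paper's own route: the theorem is stated immediately after the three propositions with the remark ``Thus by the above propositions, we can give the following theorem,'' so the intended proof is precisely the assembly of (i) $g_0,g_1$ being Lie algebra morphisms, (ii) $g_0\partial=\partial'g_1$, and (iii) $g_1(p\cdot m)=g_0(p)\cdot g_1(m)$ that you describe. No further comment is needed.
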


\begin{corollary}
Let $\mathcal{P}=(M,P,\partial )$ and $\mathcal{P}^{\prime }=(M^{\prime
},P^{\prime },\partial ^{\prime })$ be crossed modules of Lie algebras and $%
f=(f_{1},f_{0}),g=(g_{1},g_{0})$ be crossed module morphisms $\mathcal{P}%
\longrightarrow $ $\mathcal{P}^{\prime }.$ Then the $f_{0}$-derivation $%
d:P\longrightarrow M^{\prime }$ satisfying for all $p,p^{\prime }\in P$
\begin{equation*}
d[p,p^{\prime }]=f_{0}(p)\cdot d(p^{\prime })-f_{0}(p^{\prime })\cdot
d(p)+[d(p),d(p^{\prime })]
\end{equation*}%
is a homotopy connecting $f$ to $g.$
\end{corollary}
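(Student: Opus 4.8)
The plan is to recognize that this corollary is essentially a repackaging of Theorem 3.6 together with Definition 3.1, so almost all of the work has already been done. First I would make explicit that the morphisms $g_0$ and $g_1$ occurring in the statement are precisely the maps built from $d$ through the homotopy formulas, namely $g_0(p)=f_0(p)+\partial'd(p)$ and $g_1(m)=f_1(m)+d\partial(m)$ for all $p\in P$ and $m\in M$; these are exactly the maps studied in Proposition 3.3. I would also note that the displayed identity in the statement is nothing but the defining condition of an $f_0$-derivation (Definition 3.2), so the standing hypothesis is simply that $d$ is an $f_0$-derivation.

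Next I would invoke Theorem 3.6 directly: since $d$ is an $f_0$-derivation, that theorem guarantees that the pair $g=(g_1,g_0)$ is a genuine Lie crossed module morphism $\mathcal{P}\longrightarrow\mathcal{P}'$. In particular $g_0$ and $g_1$ are Lie algebra homomorphisms (Proposition 3.3), they are compatible with the boundary maps, $g_0\partial=\partial'g_1$ (Proposition 3.4), and they respect the actions, $g_1(p\cdot m)=g_0(p)\cdot g_1(m)$ (Proposition 3.5). Thus $g$ is a well-defined target for the homotopy, and no crossed module axiom for $g$ needs to be reverified.

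Finally, I would compare the two defining relations of $g$ with the conditions required in Definition 3.1. The equations $g_0(p)=f_0(p)+\partial'd(p)$ and $g_1(m)=f_1(m)+d\partial(m)$ are verbatim the two conditions that define a homotopy connecting $f$ to $g$. Hence $d:f\simeq g$, which is exactly the assertion of the corollary.

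I do not expect any genuine obstacle here: the entire computational content is carried by Theorem 3.6, which itself rests on Propositions 3.3--3.5. The only point requiring care is interpretive rather than computational, namely confirming that the $g$ named in the corollary is indeed the morphism produced from $d$ by the homotopy formulas, so that Definition 3.1 applies directly. Once that identification is made, the proof is immediate.
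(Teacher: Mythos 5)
Your proof is correct and follows exactly the route the paper intends: the corollary is left without an explicit proof precisely because it is an immediate consequence of the preceding theorem together with Definition 3.1, which is what you spell out. Your remark that the only delicate point is identifying the $g$ of the statement with the morphism $(f_1+d\partial,\ f_0+\partial' d)$ constructed from $d$ is well taken, and your argument is complete once that identification is made.
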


\section{Groupoid Structure for Crossed module morphisms
and their homotopies}

In this section we construct a groupoid structure whose objects are the
crossed module morphisms $\mathcal{P}\longrightarrow \mathcal{P}^{\prime }$,
with morphisms being the homotopies between them.

\begin{lemma}
Let $\mathcal{P}=(M,P,\partial )$ and $\mathcal{P}^{\prime }=(M^{\prime
},P^{\prime },\partial ^{\prime })$ be crossed modules of Lie algebras and $%
f=(f_{1},f_{0})$ be a crossed module morphism $\mathcal{P}\longrightarrow $ $%
\mathcal{P}^{\prime }.$ Then the null function $0:P\longrightarrow M^{\prime
}$ , $0(p)=0_{M^{\prime }}$ defines an $f_{0}$ derivation connecting $f$ to $%
f.$
\end{lemma}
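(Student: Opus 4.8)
The plan is to verify the two defining identities of a homotopy (from the first Definition) for the null function $0:P\longrightarrow M^{\prime}$, and to confirm that $0$ is indeed an $f_{0}$-derivation (from the second Definition). Since the statement asserts both that $0$ is an $f_{0}$-derivation and that it connects $f$ to $f$, I would organize the argument around these two elementary checks.

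First I would check the $f_{0}$-derivation condition. Substituting $d=0$ into the defining equation gives, for all $p,p^{\prime}\in P$, the right-hand side
\begin{equation*}
f_{0}(p)\cdot 0(p^{\prime })-f_{0}(p^{\prime })\cdot 0(p)+[0(p),0(p^{\prime })]=0_{M^{\prime }},
\end{equation*}
using bilinearity of the action and of the bracket together with $0(p)=0_{M^{\prime}}$ for every $p$. The left-hand side is $0[p,p^{\prime}]=0_{M^{\prime}}$ since $0$ is the null linear map. Hence both sides agree and $0$ satisfies the $f_{0}$-derivation identity trivially. The map is evidently linear, so this step presents no obstacle.

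Next I would verify that $0$ is a homotopy connecting $f$ to $f$, that is, that the two equations in the first Definition hold with $g=f$ and $d=0$. For $p\in P$ we compute $f_{0}(p)+\partial^{\prime}0(p)=f_{0}(p)+\partial^{\prime}(0_{M^{\prime}})=f_{0}(p)$, using that the Lie homomorphism $\partial^{\prime}$ sends $0_{M^{\prime}}$ to $0_{P^{\prime}}$. Likewise, for $m\in M$ we have $f_{1}(m)+0\partial(m)=f_{1}(m)+0_{M^{\prime}}=f_{1}(m)$. Thus $d=0$ reproduces $f$ on both components, so $0:f\simeq f$ in the sense of the homotopy definition.

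There is no genuine obstacle here: the content is entirely formal, resting on the fact that the null map annihilates every bilinear or linear expression it feeds into. If anything, the only point worth stating explicitly is that $\partial^{\prime}$ being a Lie algebra homomorphism guarantees $\partial^{\prime}(0_{M^{\prime}})=0_{P^{\prime}}$, which is what makes the $g_{0}$ equation collapse to $f_{0}$. I would therefore present the proof as a short two-part computation, invoking Theorem~3.5 only insofar as it reassures us that the resulting $g=(f_{1},f_{0})$ is again a crossed module morphism, which of course it is, being equal to $f$.
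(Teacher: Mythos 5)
Your proof is correct: the paper actually states this lemma without any proof, treating it as immediate, and your two checks (the $f_{0}$-derivation identity holds trivially for the zero map by bilinearity, and the homotopy equations collapse to $f_{0}$ and $f_{1}$ because $\partial ^{\prime }(0_{M^{\prime }})=0_{P^{\prime }}$) are exactly the verification the paper leaves to the reader. Nothing is missing.
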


\begin{lemma}
Let $f=(f_{1},f_{0})$ and $g=(g_{1},g_{0})$ be crossed module morphisms $%
\mathcal{P}\longrightarrow $ $\mathcal{P}^{\prime }$ and $d$ be an $f_{0}$%
-derivation connecting $f$ to $g$. Then the linear map $\overline{d}%
=-d:P\longrightarrow M^{\prime }$ with $\overline{d}(p)=-d(p)$ is a $g_{0}$
derivation connecting $g$ to $f.$
\end{lemma}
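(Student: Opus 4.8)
The plan is to verify the two defining properties of the claimed homotopy separately: first that $\overline{d}$ realises the required relations between $g$ and $f$, and second that $\overline{d}$ satisfies the $g_0$-derivation identity. By Definition the first part amounts to checking the homotopy data, while the second is the genuinely computational statement.

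For the homotopy relations I would argue directly. Since $d$ connects $f$ to $g$, we have $g_0(p)=f_0(p)+\partial' d(p)$ and $g_1(m)=f_1(m)+d\partial(m)$. Using linearity of $\partial'$ together with $\overline{d}=-d$, I would compute $g_0(p)+\partial'\overline{d}(p)=f_0(p)+\partial' d(p)-\partial' d(p)=f_0(p)$, and likewise $g_1(m)+\overline{d}\partial(m)=g_1(m)-d\partial(m)=f_1(m)$. This shows that $\overline{d}$ connects $g$ to $f$; it is essentially immediate and requires no crossed module axiom.

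The substantial part is to show that $\overline{d}$ is a $g_0$-derivation, i.e. that $\overline{d}[p,p']=g_0(p)\cdot\overline{d}(p')-g_0(p')\cdot\overline{d}(p)+[\overline{d}(p),\overline{d}(p')]$ holds for all $p,p'\in P$. The strategy is to expand the right-hand side using $\overline{d}=-d$ and $g_0=f_0+\partial' d$, so that it becomes $-f_0(p)\cdot d(p')+f_0(p')\cdot d(p)+[d(p),d(p')]$ together with the two extra terms $-(\partial' d(p))\cdot d(p')$ and $(\partial' d(p'))\cdot d(p)$ coming from the $\partial' d$ summand of $g_0$. The crucial step, and the only place where a crossed module axiom enters, is invoking the Peiffer identity CM2 in $\mathcal{P}'$, which rewrites $(\partial' d(p))\cdot d(p')=[d(p),d(p')]$ and $(\partial' d(p'))\cdot d(p)=[d(p'),d(p)]=-[d(p),d(p')]$. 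After this substitution the three bracket-type contributions combine to a single $-[d(p),d(p')]$, leaving exactly $-\bigl(f_0(p)\cdot d(p')-f_0(p')\cdot d(p)+[d(p),d(p')]\bigr)$.

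Finally I would recognise the expression in parentheses as $d[p,p']$, by the hypothesis that $d$ is an $f_0$-derivation, so the right-hand side equals $-d[p,p']=\overline{d}[p,p']$, as required. The main obstacle is purely bookkeeping: tracking the signs correctly through the Peiffer cancellation, since both cross terms and the ambient bracket each contribute a copy of $[d(p),d(p')]$ with signs that must conspire to a net $-[d(p),d(p')]$. Once CM2 is applied to both cross terms the whole expression collapses cleanly back to the $f_0$-derivation identity for $d$.
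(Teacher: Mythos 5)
Your proposal is correct and follows essentially the same route as the paper: both arguments reduce the $g_{0}$-derivation identity for $\overline{d}=-d$ to the $f_{0}$-derivation identity for $d$ by substituting $g_{0}=f_{0}+\partial^{\prime}d$ and invoking the Peiffer identity to trade the terms $\partial^{\prime}d(p)\cdot d(p^{\prime})$ and $\partial^{\prime}d(p^{\prime})\cdot d(p)$ for brackets $[d(p),d(p^{\prime})]$. The only difference is cosmetic: you expand the right-hand side down to $-d[p,p^{\prime}]$, while the paper starts from $-d[p,p^{\prime}]$ and inserts $-[d(p),d(p^{\prime})]+[d(p),d(p^{\prime})]$ to build the right-hand side up.
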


\begin{proof}
Since $d$ is an $f_{0}$ derivation connecting $f$ to~$g$, we have
\begin{equation}
f_{0}(p)=g_{0}(p)+\partial ^{\prime }\overline{d}(p)\text{ and }%
f_{1}(m)=g_{1}(m)+\overline{d}\partial (m).
\end{equation}%
Moreover $\overline{d}$ is a $g_{0}$ derivation, since:%
\begin{equation*}
\begin{array}{lll}
\overline{d}[p,p^{\prime }] & = & -d[p,p^{\prime }] \\
& = & -(f_{0}(p)\cdot d(p^{\prime })-f_{0}(p^{\prime })\cdot
d(p)+[d(p),d(p^{\prime })]) \\
& = & -f_{0}(p)\cdot d(p^{\prime })+f_{0}(p^{\prime })\cdot
d(p)-[d(p),d(p^{\prime })] \\
&  & -[d(p),d(p^{\prime })]+[d(p),d(p^{\prime })] \\
& = & -f_{0}(p)\cdot d(p^{\prime })-\partial ^{\prime }d(p)\cdot d(p^{\prime
})+f_{0}(p^{\prime })\cdot d(p) \\
&  & +\partial ^{\prime }d(p^{\prime })\cdot d(p)+[d(p),d(p^{\prime })] \\
& = & f_{0}(p)\cdot (-d(p^{\prime }))+\partial ^{\prime }d(p)\cdot
(-d(p^{\prime }))-f_{0}(p^{\prime })\cdot (-d(p)) \\
&  & -\partial ^{\prime }d(p^{\prime })\cdot (-d(p))+[-d(p),-d(p^{\prime })]
\\
& = & (f_{0}(p)+\partial ^{\prime }d(p))\cdot (-d(p^{\prime
}))-(f_{0}(p^{\prime })+\partial ^{\prime }d(p^{\prime }))\cdot (-d(p)) \\
&  & +[-d(p),-d(p^{\prime })] \\
& = & g_{0}(p)\cdot (-d(p^{\prime }))-g_{0}(p^{\prime })\cdot
(-d(p))+[-d(p),-d(p^{\prime })].%
\end{array}%
\end{equation*}
\end{proof}

\begin{lemma}
(Concatenation of derivations) Let $f,g$ and $h$ be Lie crossed module
morphisms $\mathcal{P}\longrightarrow $ $\mathcal{P}^{\prime }$, $d$ be an $%
f_{0}$ derivation connecting $f$ to $g$, and $d^{\prime }$ be a $g_{0}$
derivation connecting $g$ to $h$. Then the linear map $(d+d^{\prime
}):P\longrightarrow M^{\prime }$ such that $(d+d^{\prime
})(p)=d(p)+d^{\prime }(p),$ defines an $f_{0}$ derivation (therefore a
homotopy) connecting $f$ to $h.$
\end{lemma}

\begin{proof}
We know that $f\overset{(f_{0},d)}{\longrightarrow }g$ and $g\overset{%
(g_{0},d^{\prime })}{\longrightarrow }h$. Therefore by definition%
\begin{equation*}
\begin{array}{ccc}
h_{0}(p) & = & f_{0}(p)+\partial ^{\prime }(d+d^{\prime })(p) \\
h_{1}(m) & = & f_{1}(m)+(d+d^{\prime })\partial (m).%
\end{array}%
\end{equation*}%
Let us see that $d+d^{\prime }$ satisfies the condition for it to be an $%
f_{0}$ derivation:%
\begin{equation*}
\begin{array}{lll}
(d+d^{\prime })[p,p^{\prime }] & = & d[p,p^{\prime }]+d^{\prime
}[p,p^{\prime }] \\
& = & f_{0}(p)\cdot d(p^{\prime })-f_{0}(p^{\prime })\cdot
d(p)+[d(p),d(p^{\prime })] \\
&  & +g_{0}(p)\cdot d^{\prime }(p^{\prime })-g_{0}(p^{\prime })\cdot
d^{\prime }(p)+[d^{\prime }(p),d^{\prime }(p^{\prime })] \\
& = & f_{0}(p)\cdot d(p^{\prime })-f_{0}(p^{\prime })\cdot
d(p)+[d(p),d(p^{\prime })] \\
&  & +(f_{0}(p)+\partial ^{\prime }d(p))\cdot d^{\prime }(p^{\prime
})-(f_{0}(p^{\prime })+\partial ^{\prime }d(p^{\prime }))\cdot d^{\prime }(p)
\\
&  & +[d^{\prime }(p),d^{\prime }(p^{\prime })] \\
& = & f_{0}(p)\cdot d(p^{\prime })-f_{0}(p^{\prime })\cdot
d(p)+[d(p),d(p^{\prime })]+f_{0}(p)\cdot d^{\prime }(p^{\prime }) \\
&  & +\partial ^{\prime }d(p)\cdot d^{\prime }(p^{\prime })-f_{0}(p^{\prime
})\cdot d^{\prime }(p)-\partial ^{\prime }d(p^{\prime })\cdot d^{\prime }(p)
\\
&  & +[d^{\prime }(p),d^{\prime }(p^{\prime })] \\
& = & f_{0}(p)\cdot d(p^{\prime })-f_{0}(p^{\prime })\cdot
d(p)+[d(p),d(p^{\prime })]+f_{0}(p)\cdot d^{\prime }(p^{\prime }) \\
&  & +[d(p),d^{\prime }(p^{\prime })]-f_{0}(p^{\prime })\cdot d^{\prime
}(p)-[d(p^{\prime }),d^{\prime }(p)]+[d^{\prime }(p),d^{\prime }(p^{\prime
})] \\
& = & f_{0}(p)\cdot (d(p^{\prime })+d^{\prime }(p^{\prime
}))-f_{0}(p^{\prime })\cdot (d(p)+d^{\prime }(p)) \\
&  & +[d(p)+d^{\prime }(p),d(p^{\prime })+d^{\prime }(p^{\prime })]%
\end{array}%
\end{equation*}%
for all $p,p^{\prime }\in P.$ Therefore $(d+d^{\prime })$ is an $f_{0}$
derivation connecting $f$ to~$h.$
\end{proof}

\begin{theorem}
Let $\mathcal{P}$ and $\mathcal{P}^{\prime }$ be two arbitrary crossed
modules of Lie algebras. We have a groupoid $HOM(\mathcal{P},\mathcal{P}%
^{\prime })$, whose objects are the crossed module morphisms $\mathcal{P}%
\longrightarrow $ $\mathcal{P}^{\prime },$ the morphisms being their
homotopies. In particular the relation below, for crossed module morphisms $%
\mathcal{P}\longrightarrow $ $\mathcal{P}^{\prime },$ is an equivalence
relation:%
\begin{equation*}
\text{\textquotedblleft }f\simeq g\Leftrightarrow \text{there exists an }%
f_{0}\text{ derivation }d\text{ connecting }f\text{ to }g\text{%
\textquotedblright .}
\end{equation*}
\end{theorem}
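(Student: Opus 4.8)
The plan is to assemble the final theorem directly from the lemmas already established in this section, since each of the groupoid axioms corresponds to one of them. First I would observe that the data of the proposed groupoid $HOM(\mathcal{P},\mathcal{P}^{\prime})$ is already fully specified: the objects are crossed module morphisms $f=(f_1,f_0):\mathcal{P}\longrightarrow\mathcal{P}^{\prime}$, and an arrow $f\longrightarrow g$ is an $f_0$-derivation $d$ connecting $f$ to $g$ (which by Corollary is exactly a homotopy $d:f\simeq g$). The source and target maps send such an arrow to $f$ and $g$ respectively, and these are well defined because the homotopy equations $g_0(p)=f_0(p)+\partial^{\prime}d(p)$ and $g_1(m)=f_1(m)+d\partial(m)$ determine $g$ uniquely from $f$ and $d$.

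Next I would verify the three groupoid axioms one by one by citing the section's lemmas. For identities, the first Lemma provides the null derivation $0:P\longrightarrow M^{\prime}$, which is an $f_0$-derivation connecting $f$ to $f$; this serves as the identity arrow $\mathrm{id}_f$ at each object $f$. For composition, the Concatenation Lemma shows that given an $f_0$-derivation $d:f\longrightarrow g$ and a $g_0$-derivation $d^{\prime}:g\longrightarrow h$, the sum $d+d^{\prime}$ is an $f_0$-derivation connecting $f$ to $h$; this gives a composition law $d^{\prime}\circ d=d+d^{\prime}$ with matching source and target. For inverses, the second Lemma shows that $\overline{d}=-d$ is a $g_0$-derivation connecting $g$ to $f$, so every arrow is invertible.

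Then I would check that these operations satisfy the categorical identities, which reduce to elementary facts about addition of linear maps. Associativity of composition follows from associativity of addition, $(d+d^{\prime})+d^{\prime\prime}=d+(d^{\prime}+d^{\prime\prime})$; the identity laws follow from $0+d=d=d+0$; and the inverse laws follow from $d+(-d)=0=(-d)+d$, the null derivation being precisely the identity at the relevant object. Since source and target compose correctly in each case (the target of $d$ equals the source of $d^{\prime}$ exactly when concatenation is defined), all groupoid axioms hold.

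Finally I would deduce the equivalence relation statement as a formal consequence: reflexivity is the existence of the identity arrow (first Lemma), symmetry is the existence of inverses (second Lemma), and transitivity is the existence of composites (Concatenation Lemma). I do not anticipate a genuine obstacle here, as all the analytic content has been discharged in the preceding lemmas; the only point requiring a little care is bookkeeping the source/target types of the derivations, namely that the inverse of an $f_0$-derivation is a $g_0$-derivation and that concatenation changes the base morphism appropriately, so that the composition is always defined on arrows with compatible endpoints.
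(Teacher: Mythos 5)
Your proposal is correct and follows essentially the same route as the paper, which offers no separate proof for this theorem but clearly intends it to be assembled from the three preceding lemmas (null derivation as identity, negation as inverse, concatenation as composition), exactly as you do. Your explicit verification of associativity and the unit and inverse laws via addition of linear maps, and your remark on tracking whether a derivation is an $f_{0}$- or $g_{0}$-derivation, supply bookkeeping the paper leaves implicit but introduce no new ideas beyond its intended argument.
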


\section{Acknowledgements}

This research was supported by Eski\c{s}ehir Osmangazi University Scientific Research Center (BAP) under Grant No:2016-1129.

\end{document}